\newcommand{\rr}{\mathbb{R}}
\newcommand{\pp}{\mathbb{P}}
\newcommand{\qq}{\mathbb{Q}}
\newcommand{\ff}{\mathbb{F}}
\newcommand{\PP}{\mathbb{P}}
\renewcommand{\O}{\mathcal{O}}
\renewcommand{\P}{\mathcal{P}}
\newcommand{\fa}{\mathfrak{a}}
\newcommand{\fp}{\mathfrak{p}}
\newcommand{\ord}{\text{ord}}
\newcommand{\Pic}{\operatorname{Pic}}
\newcommand{\Aut}{\operatorname{Aut}}
\renewcommand{\epsilon}{\varepsilon}
\newcommand{\Fix}{\operatorname{Fix}}
\newtheorem{thm}{Theorem}[section]
\newtheorem{ithm}{Theorem}
\newtheorem{icor}[ithm]{Corollary}
\newtheorem{lem}[thm]{Lemma}
\newtheorem{conj}[thm]{Conjecture}
\newcommand{\defi}[1]{\textsf{#1}}
\theoremstyle{definition}
\newtheorem{example}[thm]{Example}
\newtheorem{iquestion}[ithm]{Question}
\theoremstyle{remark}
\newtheorem{rem}[thm]{Remark}
\newcommand*{\TikzArrow}[1][]{\mathbin{\tikz [baseline=-0.25ex,-latex,#1] \draw [#1] (0pt,0.5ex) -- (1.3em,0.5ex);}}%
\newcommand{\rat}{\TikzArrow[->, densely dashed]}
\newcommand{\embed}{\hookrightarrow}
\newcommand{\wt}[1]{\widetilde{#1}}
\newcommand{\cO}{\mathcal{O}}
\newcommand{\ot}{\otimes}
\title{Quadratic points on double planes}
\author{Nathan Chen}
\address{Department of Mathematics, Harvard University, 1 Oxford Street, Cambridge MA 02138}
\email{nathanchen@math.harvard.edu}
\urladdr{https://nathanchenmath.github.io}
\author{Ben Church}
\address{Department of Mathematics, Stanford University, 450 Jane Stanford Way Building 380, Stanford, CA 94305, USA}
\email{bvchurch@stanford.edu}
\urladdr{https://web.stanford.edu/~bvchurch/writing/}
\author{Hector Pasten}
\address{Departamento de Matem\'{a}ticas, Pontificia Universidad Cat\'{o}lica de Chile. Facultad de Matem\'{a}ticas,
4860 Av. Vicu\~{n}a Mackenna, Macul, RM, Chile}
\email{hector.pasten@uc.cl}
\urladdr{https://www.mat.uc.cl/~hector.pasten/}
\author{Isabel Vogt}
\address{Department of Mathematics, Brown University, Box 1917, 151 Thayer Street, Providence, RI 02912, USA}
\email{ivogt.math@gmail.com}
\urladdr{https://www.math.brown.edu/ivogt/}
\begin{document}

\begin{abstract}
    Zariski dense collections of quadratic points on curves \(X\) are well-understood by results of Harris--Silverman and Vojta, but when \(\dim X \geq 2\) there is not an analogous geometric characterization, even conjecturally.  In this note we consider the case of a double cover \(\pi \colon X \to \pp^r\), where Hilbert's Irreducibility Theorem implies that the quadratic points in the fibers of \(\pi\) are dense.  We show that Vojta's Conjecture implies that, once the canonical bundle of \(X\) is sufficiently positive, there are no other sources of Zariski dense quadratic points.  This is complemented by several examples of surfaces \(X \to \pp^2\) with an additional source of dense quadratic points. 
\end{abstract}

\maketitle

\section{Introduction}

Let \(k\) be a number field and let \(X/k\) be a smooth projective variety. The algebraic points on \(X\) -- that is the closed points \(x \in X\) together with their residue field \(k(x)\) --- encode the arithmetic of \(X\).  A fruitful guiding principle is that the geometry of \(X\) governs its arithmetic, and hence we expect the geometry of \(X\) to meaningfully impact the behavior of the algebraic points.  For example, the most famous such prediction is the Bombieri--Lang Conjecture, which asserts that a variety of general type does not have Zariski dense \(k\)-points. This expectation is backed up by many theorems in the case that \(\dim X = 1\), including Faltings' Theorem, which settles the dimension \(1\) case of the Bombieri--Lang Conjecture \cite{Faltings-Mordell}.

Some of these results extend to higher degree points.  For example, Harris and Silverman showed that if \(X\) has infinitely many \defi{quadratic points} (closed points \(x\) for which \([k(x):k] = 2\)) then \(X\) is a double cover of a \(\pp^1\) or a positive rank elliptic curve \cite{HS-Degree2} (see also \cite[Theorem 1.2(1)]{KV}).  

While a hyperelliptic curve \(\pi \colon X \to \pp^1\) has infinitely many quadratic points pulled back under \(\pi\), this doesn't necessarily explain \textit{all} of the quadratic points on \(X\).  Indeed, a curve of genus \(2\) with simple Jacobian of positive rank will have infinitely many quadratic points \textit{not} pulled back from a map to a lower genus curve \cite[Remark 4.3.4]{VV}.  In another direction, there exist genus \(3\) curves that are simultaneously a double cover of \(\pp^1\) and a positive rank elliptic curve, and so they have infinitely many quadratic points that are not pulled back under \(\pi\).  Nevertheless, Vojta established inequalities involving the arithmetic discriminant which imply that if the genus of \(X\) is at least \(4\), then all but finitely many quadratic points are pulled back under \(\pi\) \cite[Corollary 0.5]{vojta_92}, \cite{Vojta-arithmeticdiscriminant}. More generally, Vojta showed that, assuming the genus of \( X \) is sufficiently large, a given map \( \mu \colon X \rightarrow \pp^{1} \) \defi{contracts} all but finitely many points of a fixed degree, i.e., \(k(\mu(x)) \subsetneq k(x)\) \cite[Corollary 0.3]{vojta_92}.

In this note, we generalize Vojta's results to higher dimensions.  Let \(\pi \colon X \to \pp^r\) be a ramified double cover.   
Vojta has conjectured a strong inequality (see Section~\ref{sec:vojta}) involving the canonical heights of algebraic points and the discriminants of the number field they generate, that, in particular, implies the Bombieri--Lang Conjecture.  We show that this also implies that \(\pi\) constrains the quadratic points on \(X\): once the branch divisor has degree \(2m\) with \(m\geq r+4\), the quadratic points on \(X\) \textit{not} lying in the fibers of \(\pi\) cannot be Zariski dense.
This is a special case of a more general result about degree \(d\) points on a cyclic cover \(\pi \colon X \to \pp^r\) that are not contracted by the map \(\pi\).

\begin{ithm}\label{thm:main}
    Assume Vojta's Conjecture (Conjecture~\ref{conj:vojta}).  Let \(\pi \colon X \to \pp^r\) be a degree \(e\) cyclic cover over a branch divisor of degree \(em\) with \(m > \frac{r + 2d-1}{e-1}\) such that \(X\) has canonical singularities.  Then
    \[\overline{\left\{x \in X: \deg(x) = d, \text{ and } k(\pi(x)) = k(x)\right\}}^{\text{Zar}} \neq X.\]
\end{ithm}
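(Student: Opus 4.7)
The plan is to apply Vojta's Conjecture to $X$ with the ample class $A := \pi^* \O_{\pp^r}(1)$ and convert the resulting inequality into a height bound on the images $\pi(x) \in \pp^r$, from which Northcott's theorem finishes the argument.

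First I would compute $K_X$. For a degree $e$ cyclic cover $\pi \colon X \to \pp^r$ branched along a divisor $B$ of degree $em$, the ramification formula gives
\[
K_X \;=\; \pi^*\!\left(K_{\pp^r} + \tfrac{e-1}{e} B\right) \;=\; \pi^* \O_{\pp^r}(N), \qquad N := (e-1)m - r - 1.
\]
The canonical singularities hypothesis ensures that on a resolution $\rho \colon \wt{X} \to X$ one has $K_{\wt{X}} = \rho^* K_X + E$ with $E \geq 0$ exceptional, so the identity above transports into height inequalities off a proper closed subset of $X$. The hypothesis $m > (r + 2d - 1)/(e-1)$ is precisely $N > 2d - 2$.

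Next, fix $\epsilon > 0$ with $\epsilon < N - (2d-2)$ and apply Vojta's Conjecture on (a resolution of) $X$ for points of degree at most $d$ with big class $A$. This produces a proper Zariski closed $Z \subsetneq X$ such that every $x \in X(\overline{k}) \setminus Z$ with $[k(x):k] \leq d$ satisfies
\[
h_{K_X}(x) \;\leq\; d_k(x) + \epsilon\, h_A(x) + O(1).
\]
If $k(\pi(x)) = k(x)$, then the residue fields, and hence the discriminants, agree: $d_k(x) = d_k(\pi(x))$. A Silverman-type estimate for points on projective space -- obtained by choosing a primitive generator of $k(\pi(x))/k$ from the affine coordinates of $\pi(x)$ and invoking the classical bound on $\aa^1$ -- yields
\[
d_k(\pi(x)) \;\leq\; (2d - 2)\, h(\pi(x)) + O(1).
\]
Combined with $h_{K_X}(x) = N \cdot h(\pi(x)) + O(1)$ and $h_A(x) = h(\pi(x)) + O(1)$, this gives
\[
\bigl(N - (2d - 2) - \epsilon\bigr) h(\pi(x)) \;\leq\; O(1),
\]
so $h(\pi(x))$ is uniformly bounded. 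Northcott's theorem then forces $\pi(x)$ into a finite set of closed points of $\pp^r$, and consequently $x$ lies in the union of $Z$ with the preimage of this finite set -- a proper closed subset of $X$, as required.

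The main obstacle is the confluence of technical issues around singularities: verifying the equality $K_X = \pi^* \O_{\pp^r}(N)$ in the canonical-singular setting, legitimately applying Vojta's Conjecture on a resolution so that the exceptional divisors contribute only favorably to the inequality, and ensuring that the contraction condition $k(\pi(x)) = k(x)$ is controlled at closed points lying over the singular locus (which can in any case be absorbed into $Z$). A secondary subtlety is extracting the right constant from the discriminant--height inequality: the hypothesis $N > 2d - 2$ is tight against the Silverman constant $2d - 2$, so any other form of that bound would shift the threshold on $m$.
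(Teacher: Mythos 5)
Your proposal is correct and follows essentially the same route as the paper: Riemann--Hurwitz to identify $K_X = \pi^*\O_{\pp^r}((e-1)m-r-1)$, passage to a resolution using the canonical singularities hypothesis to discard the effective exceptional divisor, Vojta's Conjecture combined with the equality $d_k(x)=d_k(\pi(x))$ for non-contracted points and Silverman's bound $d_k(\pi(x)) \le (2d-2)h(\pi(x))+O(1)$, and Northcott to conclude. The only cosmetic difference is your choice of $A = \pi^*\O_{\pp^r}(1)$ rather than the paper's $A = K_{\widetilde{X}}$ (which the paper must separately verify is big); both yield the same threshold $N > 2d-2$.
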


\begin{icor}\label{cor:small_gcd}
    Assume Vojta's Conjecture (Conjecture~\ref{conj:vojta}).  In the setup of Theorem~\ref{thm:main},
    \begin{enumerate}
        \item If \((d,e) = 1\), then the degree \(d\) points on \(X\) cannot be Zariski dense.
        \item If \((d,e) = p\) is prime, then there exists a Zariski closed \(V \subsetneq X\) such that for every degree \(d\) point \(x \in X \smallsetminus V\), the residue field \(k(x)\) is a cyclic degree \(p\) extension of \(k(\pi(x))\).
    \end{enumerate}
\end{icor}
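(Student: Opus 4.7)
The plan is to reduce both parts of the corollary directly to Theorem~\ref{thm:main} by controlling the possible degrees $f := [k(x):k(\pi(x))]$ for a degree $d$ closed point $x \in X$. On the one hand, $d = f \cdot [k(\pi(x)):k]$ immediately gives $f \mid d$. On the other hand, $\pi$ is a cyclic cover of degree $e$, so over the étale locus $U \subseteq \pp^r$ the action of $\zz/e\zz$ presents $\pi^{-1}(U) \to U$ as a $\zz/e\zz$-torsor. Decomposing a fiber $\pi^{-1}(y) = \text{Spec}(A)$ at a closed point $y \in U$ into $\zz/e\zz$-orbits realizes each field factor $A_i$ of $A$ as a Galois extension of $k(y)$ with Galois group a subgroup of $\zz/e\zz$, hence cyclic. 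Therefore for any degree $d$ point $x \in \pi^{-1}(U)$ the extension $k(x)/k(\pi(x))$ is cyclic with $f \mid e$, and combining with $f \mid d$ yields $f \mid \gcd(d,e)$.

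Part~(1) is then immediate. If $\gcd(d,e) = 1$ and $x \in \pi^{-1}(U)$ has degree $d$, then $f = 1$, i.e.\ $k(\pi(x)) = k(x)$, so Theorem~\ref{thm:main} forbids such points from being Zariski dense. The remaining degree $d$ points of $X$ lie in the proper closed subset $\pi^{-1}(\pp^r \smallsetminus U)$ (the preimage of the branch divisor), and hence the full set of degree $d$ points on $X$ is not Zariski dense.

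For part~(2), with $\gcd(d,e) = p$ prime we get $f \in \{1,p\}$ for any degree $d$ point $x \in \pi^{-1}(U)$. I will take $V \subsetneq X$ to be the union of $\pi^{-1}(\pp^r \smallsetminus U)$ with the proper closed subvariety supplied by Theorem~\ref{thm:main} (which contains all degree $d$ points with $f = 1$). Any degree $d$ point $x \in X \smallsetminus V$ then satisfies $f = p$, and by the cyclicity discussion above $k(x)/k(\pi(x))$ is a cyclic extension of degree $p$.

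The main point requiring care is the étale-local analysis of $\pi$ as a $\zz/e\zz$-cover, which simultaneously delivers $f \mid e$ and the cyclicity of the residue extension $k(x)/k(\pi(x))$; once those are in hand, both statements are essentially bookkeeping built on Theorem~\ref{thm:main}.
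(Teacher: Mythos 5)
Your proposal is correct and follows essentially the same route as the paper: both rest on the two divisibility facts ($[k(x):k(\pi(x))]$ divides $d$ because residue fields are nested, and divides $e$ with the extension cyclic because $\pi$ is a cyclic Galois cover), and then reduce to Theorem~\ref{thm:main}. Your only variation is restricting to the \'etale locus before invoking the Galois-orbit decomposition and absorbing the preimage of the branch divisor into the exceptional closed set, whereas the paper asserts the cyclicity statement for all fibers directly; this is a harmless technical refinement, not a different argument.
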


To analyze the sharpness of the bounds in Theorem~\ref{thm:main}, we will focus on the case \(d=e=2\) of quadratic points on double covers of projective space. Here, Theorem~1 implies that outside a proper subset, every quadratic point on $X$ is pulled back along $\pi \colon X \to \pp^r$ from a $k$-point of $\pp^r$.
In the case \(\dim X =1\), the bound \(m \geq 5\) exactly translates into the genus of \(X\) being at least \(4\), and so Theorem~\ref{thm:main} is consistent with Vojta's Theorem. When \( \dim X = 2\), the bound becomes $m \geq 6$. In this case, we show that Theorem 1 is \textit{almost} optimal, both in terms of $\deg D$ and in terms of the singularities of $X$.

\begin{ithm}\label{thm:SharpExamples}
    Let $\pi \colon X \rightarrow \pp^{2}$ be a double cover branched over a divisor $D \subset \pp^{2}$ of degree $2m$.
    \begin{enumerate}
    \item\label{sharp1} For every $m \leq 4$, there is an example where $D$ is smooth and
        \[\overline{\left\{x \in X: \deg(x) = 2, \text{ and } k(\pi(x)) = k(x)\right\}}^{\text{Zar}} = X.\]
    \item\label{sharp2} There exist examples of singular $D$ with \(m\) arbitrarily large such that
        \[\overline{\left\{x \in X: \deg(x) = 2, \text{ and } k(\pi(x)) = k(x)\right\}}^{\text{Zar}} = X.\]
    As \(m \to \infty\), these surfaces $X$ are of general type with increasing volume in $m$. 
    \end{enumerate}
\end{ithm}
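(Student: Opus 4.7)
The proof is by explicit construction. The unifying principle: each \(k\)-rational line \(L \subset \pp^2\) pulls back to a hyperelliptic curve \(C_L := \pi^{-1}(L)\) of genus \(g = m - 1\) over \(L\); a quadratic point \(x \in X\) lying on \(C_L\) with \(k(\pi(x)) = k(x)\) is exactly a quadratic point of \(C_L\) not in a fiber of the hyperelliptic map. For a \(k\)-pencil of lines \(\{L_t\}_{t \in \pp^1}\), such points assemble to a Zariski dense subset of \(X\) provided infinitely many members \(C_{L_t}\) have infinitely many such non-fiber quadratic points. By Harris--Silverman, density on \(C_L\) occurs iff \(C_L\) admits a \emph{second} degree-2 map: to \(\pp^1\) when \(g \leq 1\), or to an elliptic curve of positive rank over some quadratic extension of \(k\) when \(g \in \{2,3\}\).

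For part~\ref{sharp1}, we produce such a second degree-2 structure uniformly over a pencil via a \(k\)-rational involution. Fix \(p = [0:0:1]\) and the involution \(\sigma \colon [x:y:z] \mapsto [x:y:-z]\), which preserves every line through \(p\). Take a smooth \(\sigma\)-invariant branch divisor \(D = \{F = 0\}\) with \(F \in k[x, y, z^2]\) of degree \(2m\); then \(\sigma\) lifts to an involution \(\tau\) on \(X\). A Riemann--Hurwitz computation counting fixed points of \(\tau|_{C_L}\) over the two \(\sigma\)-fixed points on \(L\) shows \(\tau|_{C_L}\) has exactly four fixed points for generic \(L\), with quotient \(\pp^1\) if \(m \leq 2\) and an elliptic curve \(E_L\) if \(m \in \{3,4\}\). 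For \(m = 1\) the \(C_L\) are already rational with dense quadratic points; for \(m = 2\) the second \(\pp^1\)-cover combined with positive generic rank yields density; for \(m \in \{3,4\}\) the family \(\{E_L\}\) is an elliptic surface over the pencil base, and Silverman's specialization theorem supplies infinitely many \(L\) with positive-rank \(E_L\) over a quadratic extension, giving dense non-fiber quadratic points on each such \(C_L\) and hence on \(X\).

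For part~\ref{sharp2}, we allow \(D\) to be singular and \(m\) arbitrarily large. Let \(D\) have a \(k\)-rational singularity of multiplicity \(2m - 4\) at a fixed point \(p \in \pp^2\); any line \(L\) through \(p\) then meets \(D\) residually in only four further points, so after resolving the induced singularity of \(X\), the strict transform of \(\pi^{-1}(L)\) in \(\widetilde{X} \to X\) is a smooth genus-\(1\) curve. This realizes an elliptic fibration on \(\widetilde{X}\); arranging positive generic Mordell--Weil rank over some quadratic extension and applying Silverman specialization yields dense non-fiber quadratic points. The canonical bundle formula \(K_X = \pi^*((m-3)H)\) together with bounded exceptional corrections on \(\widetilde{X}\) gives \(K_{\widetilde{X}}^2 = 2(m-3)^2 + O(1)\), confirming that \(\widetilde{X}\) is of general type with volume tending to infinity.

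The main obstacle is the \(m = 4\) case of part~\ref{sharp1}: exhibiting a smooth \(\sigma\)-invariant plane octic \(D\) for which the family of bielliptic quotients \(\{E_L\}\) attains positive Mordell--Weil rank over some quadratic extension of \(k\) for infinitely many \(L\). This combines a smoothness check within the subsystem of \(\sigma\)-invariant octics with a positive-rank specialization argument; both should succeed for a sufficiently generic or explicit choice of \(F\), but this is where the sharpness claim is most delicate.
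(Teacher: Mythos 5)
Your strategy is a fiberwise version of the construction the paper actually uses for $m=2,4$: the family of quotients $\{C_L/\tau\}_{L \ni p}$ is precisely the fibration of $Y = X/\tau \to \pp(1,1,2)$ over $\pp^1$, so the two approaches coincide in substance. The problem is that you have deferred exactly the step on which the whole construction hinges. For $m=4$ (and for $m=3$ in your setup) the quotient family is an elliptic fibration, and to get \emph{dense} quadratic points you must exhibit positive Mordell--Weil rank of the generic fiber (a non-torsion section), after which Silverman specialization applies. You acknowledge this is ``where the sharpness claim is most delicate'' and do not supply it; but without it, nothing is proved for $m=3,4$. The paper resolves this with a concrete choice of branch octic $x^8 - y^8 + z^8$ and an explicit non-torsion point $[1:t^4:1]$ on the generic fiber $Y^2 = X^4 + (t^8-1)Z^4$ over $\qq(t)$, verified by a Magma computation (Lemma~\ref{lem:elliptic}); it avoids the rank question for $m=3$ entirely by using a different example (a $(1,1),(2,2)$ complete intersection K3 in $\pp^2\times\pp^2$ with two degree-$2$ projections, where Hilbert irreducibility alone suffices). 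Two further points: asking for ``positive rank over a quadratic extension of $k$'' is the wrong condition --- points of $E_L$ over a quadratic extension pull back to degree-$4$ points of $C_L$; you need $k$-rational points on the genus-one quotient $C_L/\tau$ itself. And your fixed-point count is off for $m=3$: Riemann--Hurwitz forces $f \equiv 2 \pmod 4$ when $g(C_L)=2$, so $\tau|_{C_L}$ has two fixed points, not four (the conclusion that the quotient is elliptic survives).

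In part~\eqref{sharp2} the same rank gap recurs, and there is an additional error: with a singular point of multiplicity $2m-4$ on a degree-$2m$ branch curve, the exceptional corrections to $K_{\widetilde{X}}^2$ are emphatically not $O(1)$ --- they grow quadratically in $m$ and cancel the main term. (In the paper's example the branch curve has degree $8n$ with an $8(n-1)$-fold point and $K^2 = 16(n-1)$, i.e., linear rather than quadratic in the degree.) The conclusion ``general type with unbounded volume'' may still be attainable from your configuration, but it requires an actual verification that $K$ of the resolution is big, which your estimate does not provide. The paper sidesteps both issues by building part~\eqref{sharp2} on top of the already-established $m=4$ example: it pulls back the K3 double cover of $\ff_2$ along degree-$n$ base changes and an involution of $\ff_n$, so the dense rational curves (hence the quadratic points) are inherited rather than re-proved, and ampleness of $K_{\widetilde{X}} = \widetilde{\pi}^*(2S_0 + (3n-2)F)$ is checked directly on $\ff_n$ before contracting down to a singular double plane.
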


\noindent Theorem~\ref{thm:SharpExamples}\eqref{sharp2} also illustrates the importance of requiring that \(\pi \colon X \to \pp^r\) is a finite morphism, as opposed to simply a rational map or even a (non-flat) morphism.  Specifically, by resolving singularities in Theorem~\ref{thm:SharpExamples}\eqref{sharp2}, we give an infinite family of smooth surfaces \( \widetilde{X} \) with degree of irrationality equal to \(2\), unbounded volume \(K_{\widetilde{X}}^2\), and dense quadratic points not contracted by the morphism \(\widetilde{X} \to \pp^2\) of degree \(2\).

When \(r=d=e=2\), there is one interesting case that is not addressed by Theorems~\ref{thm:main} and \ref{thm:SharpExamples}. 

\begin{iquestion}\label{q:deg10}
    Does there exist a double cover $\pi \colon X \rightarrow \pp^{2}$ branched over a divisor of degree $10$ such that $X$ has canonical singularities (or is even smooth) for which
    \[\overline{\left\{x \in X: \deg(x) = 2, \text{ and } k(\pi(x)) = k(x)\right\}}^{\text{Zar}} = X?\]
\end{iquestion}

Our technique for producing examples in Theorem~\ref{thm:SharpExamples}\eqref{sharp1} is to find a double cover \(\pi \colon X \to \pp^2\) admitting \textit{another} degree \(2\) map \(f \colon X \rat V\) such that a Zariski dense set of fibers of \(f\) are quadratic points on \(X\).  We show in Theorem~\ref{thm:degree_2_maps} that, assuming the Bombieri--Lang conjecture, if Question~\ref{q:deg10} has an affirmative answer, then the Zariski dense set of quadratic points \textit{cannot} be fibers of a map of degree \(2\).  Thus, if Theorem~\ref{thm:main} is sharp when \(r=d=e=2\), the additional dense quadratic points will come from a more exotic source.

\subsection*{Acknowledgements}
This material is based partially upon work supported by National Science Foundation grant DMS-1928930 while the last two authors were in residence at the Simons Laufer Mathematical Sciences Institute in Berkeley, California, during the Spring 2023 semester on \textit{Diophantine Geometry}.  We thank SLMath for the environment and support that made this work possible. We also thank Bianca Viray for coorganizing the American Institute of Mathematics (AIM) workshop on \textit{Degree \(d\) points on surfaces}.  We thank AIM for creating a stimulating environment and all of the participants for helpful conversations, especially Niven Achenjang, Andres Fernandez Herrero, Yifeng Huang, Lena Ji, and Ritvik Ramkumar. B.C.\ thanks his advisor Ravi Vakil for helpful comments on Theorem~\ref{thm:degree_2_maps}. During the preparation of this article, N.C. was supported in part by an NSF postdoctoral fellowship DMS-2103099. B.C.\ was supported in part by a NSF Graduate Research Fellowship under grant DGE-2146755. H.P.\ was supported by ANID Fondecyt Regular grant 1230507 from Chile. I.V.\ was supported in part by NSF grants DMS-2200655 and DMS-2338345.

\section{Vojta's Conjecture}

\subsection{Background and notation on height functions}\label{sec:bakcground}
Let \(k\) be a number field and let \(M_k\) denote the set of places of \(k\) and \(S_k\) the set of archimedean places.  Given a place \(v \in M_k\), we write \(\|\cdot\|_v\) for the (normalized) absolute value at \(v\) as in \cite[Equation (1.1.2)]{vojta_diophantine}, where the normalization is such that the product formula 
\begin{equation}
\prod_{v \in M_k} \|x\|_v = 1
\end{equation} 
holds for all \(x \in k^\times\).  Similarly, given a nonarchimedean place \(v\) corresponding to a prime ideal \(\fp\), the absolute value of any nonzero fractional ideal \(\fa\) can be defined by \(\|\fa\|_v = N \fp^{-\ord_{\fp}\fa}\).  Observe that 
\begin{equation}\label{eq:def_Lk}
L_k(\fa) \colonequals \prod_{v \in M_k \smallsetminus S_k} \|\fa\|_v^{-1} = \prod_{\fp \mid \fa} N \fp^{\ord_{\fp}\fa} = |N_{k/\qq}\fa|.
\end{equation}

Given an extension \(L/k\) of number fields, and a place \(w \mid v\), for every \(x \in k\) we have \(\|x\|_w = \|x\|_v^{[L_w:k_v]}\).  Hence the terms in the product formula yield 
\[\prod_{{ w \in M_L \atop w \mid v }} \|x\|_w = \|x\|_v^{[L:k]}, \qquad  \text{for all }x \in k^\times.\]

The \defi{(logarithmic) height} of a point \(P = [x_0: \cdots : x_r] \in \pp^r(\bar{k})\) is defined by 
\[h(P) = \frac{1}{[k(P):k]} \log \left( \prod_{v \in M_k} \max_i\{\|x_i\|_v\} \right).\]
Given a very ample line bundle \(A\) on a variety \(X/k\), we write \(h_A\) for the height function on the points of \(X(\bar{k})\) given by restricting \(h\) to the image of \(X\) under the complete linear system of \(A\).  In this way, the height function \(h \colon \pp^r(\bar{k}) \to \rr\) defined above is associated to the line bundle \(\O_{\pp^r}(1)\) and we omit this from the notation for simplicity.  The height function \(h_A\) is well-defined up to a bounded function.  Since any line bundle can be written as the difference of very ample line bundles, this definition extends linearly to the height function associated to any line bundle.  See \cite[Chapter 1]{vojta_diophantine} for more details and properties.  Since it will be important later, recall that the height function associated to an effective divisor \(E\) is at least a bounded function:
\begin{equation}\label{eq:effective} h_E \geq O(1)\end{equation}
away from the points of \(E\) itself \cite[Proposition 1.2.9(e)]{vojta_diophantine}.

We write \(D_{L/k}\) for the relative discriminant ideal.  The \defi{(logarithmic) discriminant} of \(L/k\) is defined by
\[d_k(L) = \frac{\log|N_{k/\qq}D_{L/k}|}{[L:k]}.\]
Using multiplicativity of the discriminant in towers of extensions, this can also be expressed as
\[d_k(L) = \frac{\log|D_{L/\qq}|}{[L:k]} - \log |D_{k/\qq}| = [k:\qq] \left(d_{\qq}(L) - d_{\qq}(k)\right).\]
If \(X\) is a variety over \(k\) and \(P \in X\) is a closed point with residue field \(k(P)\), we define the (logarithmic) discriminant \(d_k(P)\) of \(P\) to be \(d_k(k(P))\).

In the proof of Theorem~\ref{thm:main}, we will need the following result of Silverman, which shows that the height of an algebraic point is bounded from below by its discriminant.

\begin{lem}[Silverman]\label{lem:silverman}
Let \(P \in \pp^r(\bar{k})\) with \([k(P):k] = d\).  Then
\[d_k(P) \leq (2d-2)h(P)   + O(1), \]
where the implicit constant depends on \(k\).
\end{lem}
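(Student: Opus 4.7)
The plan is to reduce to the classical one-variable bound comparing the discriminant of a degree-$d$ polynomial to the heights of its roots, by replacing $P \in \pp^r(\bar k)$ with a single algebraic number $\alpha$ satisfying $k(\alpha) = k(P)$ and $h(\alpha) \leq h(P) + O(1)$.

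After rescaling so that one coordinate equals $1$, write $P = [1 : x_1 : \cdots : x_r]$ with $x_i \in k(P)$. Let $\sigma_1, \ldots, \sigma_d$ denote the distinct $k$-embeddings of $k(P)$ into $\bar k$. For $\lambda = (\lambda_1, \ldots, \lambda_r) \in k^r$, the element $\alpha_\lambda \colonequals \sum_i \lambda_i x_i \in k(P)$ is a primitive element for $k(P)/k$ precisely when the Galois conjugates $\sigma_j(\alpha_\lambda)$ are pairwise distinct. This fails only on the union of the $\binom{d}{2}$ affine hyperplanes $H_{j,l} \colonequals \{\sum_i \lambda_i(\sigma_j(x_i) - \sigma_l(x_i)) = 0\} \subset \aa^r_{\bar k}$. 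A pigeonhole count (each $H_{j,l}$ meets $\{0,1,\ldots,N\}^r$ in at most $(N+1)^{r-1}$ points) produces $\lambda \in \{0, 1, \ldots, \binom{d}{2}\}^r$ avoiding every $H_{j,l}$. Such $\lambda$ has height $O_d(1)$, and the standard bounds for heights of $k$-linear combinations give
\[h(\alpha_\lambda) \leq h(P) + O_{r,d}(1)\]
with a constant uniform in $P$.

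Now fix such $\lambda$, set $\alpha \colonequals \alpha_\lambda$, and let $f(T) = \prod_{j=1}^d (T - \sigma_j(\alpha)) \in k[T]$ be its minimal polynomial, with polynomial discriminant $\Delta(f) = \prod_{j \neq l}(\sigma_j(\alpha) - \sigma_l(\alpha)) \in k^\times$. Since $\O_k[\alpha] \subseteq \O_{k(\alpha)}$, the relative discriminant ideal $D_{k(\alpha)/k}$ divides the principal ideal $\Delta(f)\cdot \O_k$, and therefore
\[d_k(P) = d_k(k(\alpha)) \leq \tfrac{1}{d} \log|N_{k/\qq}\Delta(f)|.\]
At each place $v \in M_k$, the elementary estimate $\|x - y\|_v \leq c_v \max(1, \|x\|_v)\max(1, \|y\|_v)$ (with $c_v = 2$ archimedean and $c_v = 1$ otherwise) yields
\[\|\Delta(f)\|_v \leq c_v^{d(d-1)} \prod_{j=1}^d \max(1, \|\sigma_j(\alpha)\|_v)^{2(d-1)}.\]
Summing logarithms over all $v \in M_k$, and using that each Galois conjugate of $\alpha$ has height equal to $h(\alpha)$, gives $\log|N_{k/\qq}\Delta(f)| \leq d(2d-2) h(\alpha) + O_{d,k}(1)$. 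Dividing by $d$ and combining with the bound from the previous paragraph produces the desired $d_k(P) \leq (2d-2) h(P) + O(1)$.

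The only point requiring real care is uniformity of constants in the primitive element construction --- the coefficients $\lambda$ must be chosen independently of $P$, which is exactly what the pigeonhole argument guarantees by bounding the search over $\lambda$ purely in terms of $d$. Everything else is routine bookkeeping with standard height inequalities and the elementary comparison between polynomial discriminants and heights of roots.
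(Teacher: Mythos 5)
The paper's own proof is a two-line citation: it quotes Theorem~2 of Silverman's \emph{Lower bounds for height functions} and unwinds the definition of \(d_k\). You are instead attempting to reprove Silverman's theorem from scratch, and there is a genuine gap at the central step. You assert that \(\O_k[\alpha] \subseteq \O_{k(\alpha)}\), hence that \(D_{k(\alpha)/k}\) divides \(\Delta(f)\O_k\) for the \emph{monic} minimal polynomial \(f(T)=\prod_j(T-\sigma_j(\alpha))\). This requires \(\alpha\) to be integral over \(\O_k\), but your \(\alpha=\sum_i \lambda_i x_i\) is built from arbitrary affine coordinates \(x_i\in k(P)\) (normalized only by \(x_0=1\)), so it need not be integral. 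For non-integral \(\alpha\) the divisibility is false: replacing an integral generator \(\beta\) by \(\alpha=\beta/N\) multiplies \(\Delta(f)\) by \(N^{-d(d-1)}\), so the fractional ideal \(\Delta(f)\O_k\) acquires arbitrarily large negative valuations. The obvious patch --- rescale \(\alpha\) by its denominator --- increases \(h(\alpha)\) by the logarithm of that denominator, which is unbounded in \(P\), so the final inequality does not survive.

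This is not bookkeeping; it is exactly the point where the non-archimedean contributions to \(h(\alpha)\) must enter. Your estimate of \(\Delta(f)\) is purely archimedean (summing \(\log\|\sigma_j\alpha-\sigma_l\alpha\|_v\) over \(v\mid\infty\) to compute \(\log|N_{k/\qq}\Delta(f)|\); note that summing over \emph{all} places gives \(0\) by the product formula), whereas \(h(\alpha)\) also records denominators. The correct classical input over \(\qq\) is Mahler's inequality \(|D_{\qq(\alpha)/\qq}|\le d^d M(f)^{2d-2}\) for the \emph{primitive integral, generally non-monic} minimal polynomial \(f=a_0T^d+\cdots\): its discriminant carries the factor \(a_0^{2d-2}\), the Mahler measure \(M(f)=|a_0|\prod_j\max(1,|\sigma_j\alpha|)\) satisfies \(\log M(f)=d\,h(\alpha)\), and one needs the fact that the field discriminant divides the discriminant of a non-monic primitive polynomial (via the order it defines). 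Carrying this out relative to a general base \(k\) --- arguing prime by prime with local integral generators --- is essentially the content of Silverman's theorem. The primitive-element construction and the pigeonhole choice of \(\lambda\) with \(h(\alpha_\lambda)\le h(P)+O_{r,d}(1)\) are fine; the discriminant comparison is the part that needs the missing idea, or simply the citation the paper uses.
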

\begin{proof}
By \cite[Theorem 2]{silverman}, we have
\[h(P) \geq \frac{1}{2d-2} \left( \frac{\log L_k(D_{k(P)/k})}{d} -\#S_k \log d \right).\]
Combining \eqref{eq:def_Lk} and the definition of \(d_k\) gives
\[\frac{\log L_k(D_{k(P)/k})}{d} = \frac{\log |N_{k/\qq} D_{k(P)/k}|}{d} = d_k(P),\]
and so Silverman's theorem rearranges to the desired statement.
\end{proof}

\subsection{Vojta's Conjecture}\label{sec:vojta}

Motivated by results in Nevanlinna theory, Vojta made the following conjecture, which (in particular) implies the Bombieri--Lang Conjecture. 

\begin{conj}[{\cite[Conjecture 5.2.6]{vojta_diophantine}, \cite[Conjecture 2.1]{vojta_abc}}]\label{conj:vojta}
Suppose that \(X\) is a smooth projective variety defined over a number field \(k\).  Let \(A\) be a big divisor on \(X\) and let \(\epsilon > 0\).  For any \(d >0\), there exists a proper Zariski closed subvariety \(Z = Z(d, X, A, k, \epsilon) \subsetneq X\) such that for all closed points \(P\in (X\smallsetminus Z)\) with \([k(P):k] \leq d\), we have
\[ h_{K_X}(P) \leq d_k(P) + \epsilon h_A(P) + O(1). \]
\end{conj}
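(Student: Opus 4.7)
The ``statement'' in question is Vojta's Main Conjecture itself, one of the central open problems of Diophantine geometry; no complete proof is known, and the proposal that follows can only be aspirational. The plan I would pursue is the one suggested by Vojta's dictionary between Nevanlinna theory and Diophantine approximation: establish an arithmetic analogue of the Second Main Theorem for holomorphic curves, under which the characteristic function $T_{K_X}(r,f)$ translates to $h_{K_X}(P)$ and the truncated counting function of ramification translates to $d_k(P)$. In Nevanlinna theory, the corresponding SMT for jet differentials on varieties of log general type is a theorem in many cases (Ahlfors, Cartan, and the Bloch--Ochiai and Siu--Yeung style arguments via jet differentials), so one hopes the parallel arithmetic statement is true and amenable to the same style of argument.

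Concretely, I would first reduce the degree $d$ case to a rational-point statement on an auxiliary space. Given a closed point $P \in X$ with $[k(P):k]\leq d$, the tuple of its Galois conjugates is a $k$-rational point of the symmetric product $X^{(d)}$; the canonical bundle of a suitable resolution $\widetilde{X^{(d)}}$ is controlled by $K_X$, and (as a key lemma) the logarithmic discriminant $d_k(P)$ can be bounded in terms of ramification divisors on $\widetilde{X^{(d)}}$. Replacing $(X,d)$ by $(\widetilde{X^{(d)}},1)$ this way reduces Conjecture~\ref{conj:vojta} to the $d=1$ case at the cost of enlarging $X$. Second, following Bombieri--Vojta's original strategy for the Mordell conjecture and Faltings' refinement for subvarieties of abelian varieties, one would construct an auxiliary global section of a large multiple of $K_{\widetilde{X^{(d)}}}$ with controlled archimedean and non-archimedean norms, using arithmetic Hilbert--Samuel / arithmetic Riemann--Roch to guarantee its existence. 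Third, applying the product formula to the value of this section at $P$, combined with a suitable zero estimate (a Roth-type or Faltings-type lemma) to exclude the trivial vanishing, one would obtain the desired inequality $h_{K_X}(P) \leq d_k(P) + \epsilon h_A(P) + O(1)$ outside the exceptional locus $Z$ where the auxiliary construction degenerates.

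The hard part, and precisely the reason Vojta's Conjecture remains open, is the zero-estimate step in dimension $\geq 2$. Producing auxiliary sections whose vanishing loci interact sufficiently transversely with the ramification divisor to yield the \emph{truncated} discriminant term with the sharp coefficient $1$ (rather than, say, $2d-2$ as in Silverman's Lemma~\ref{lem:silverman}) is known only in very special geometric situations: subvarieties of abelian and semiabelian varieties, via the Faltings--Vojta product theorem, and a handful of cases for curves and special fibrations. In the generality of an arbitrary smooth projective $X$ there is currently no mechanism to control the non-transverse contributions, and even the ``easy'' function-field or Nevanlinna analogues in higher dimension require substantial hyperbolicity hypotheses that are not assumed in Conjecture~\ref{conj:vojta}. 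For this reason, within the present paper I would not attempt a proof at all: Conjecture~\ref{conj:vojta} is used as a black-box hypothesis in Theorem~\ref{thm:main}, and any genuine progress on it would constitute a major theorem in its own right rather than a step in this note.
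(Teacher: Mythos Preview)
Your assessment is exactly right and matches the paper's treatment: Conjecture~\ref{conj:vojta} is stated as an open conjecture with no proof given, and is used solely as a standing hypothesis in Theorem~\ref{thm:main}. There is nothing further to prove here.
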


\noindent Note that the original statement of this conjecture in \cite[Conjecture 5.2.6]{vojta_diophantine} involves a factor of \(\dim X\) multiplied by the discriminant.  Subsequently, this was removed in \cite[Conjecture 2.1]{vojta_abc}.

\section{Proof of Theorem~\ref{thm:main} and consequences}

Let \(\pi \colon X \to \pp^r\) be a cyclic cover of degree \(e\) branched over a divisor of degree \(em\) and assume that \(X\) has canonical singularities.  Let \(f\colon \widetilde{X} \to X\) denote a resolution of singularities.  Since \(X\) has canonical singularities, \(K_{\widetilde{X}} = f^*K_X  + E\), where \(E\) is effective.
By the Riemann--Hurwitz formula, we have
\[K_X = \pi^*(K_{\pp^r}\otimes \O_{\pp^r}((e-1)m)) \simeq \pi^*\O_{\pp^r}((e-1)m-r-1).\]
Putting these together, we have
\[K_{\widetilde{X}} = f^* \pi^* \O_{\pp^r}((e-1)m-r-1) + E.\]
Since \(E\) is effective, equation~\eqref{eq:effective} implies that for all \(P \in (\widetilde{X} \smallsetminus E) \), we have
\begin{equation}\label{eq:can_ht}
h_{K_{\widetilde{X}}}(P) \geq h_{f^* \pi^* \O_{\pp^r}((e-1)m-r-1)}(P) + O(1) = ((e-1)m-r-1)h(\pi(f(P))) + O(1),\end{equation}
where the equality comes from the linearity of the height in terms of the line bundle.

The canonical bundle \(K_{\widetilde{X}}\) is big using our hypothesis \(m > \frac{r+2d -1}{e-1}\).  Hence Conjecture~\ref{conj:vojta} on \(\widetilde{X}\) with \(A = K_{\widetilde{X}}\) and \(\epsilon\) sufficiently small implies that there exists a proper Zariski closed \(\widetilde{Z} = Z(d, \widetilde{X}, k, \epsilon)\) such that for all degree \(d\) points \(P \in (\widetilde{X} \smallsetminus \widetilde{Z}) \), we have
\begin{equation}\label{eq:vc}
(1 - \epsilon)h_{K_{\widetilde{X}}}(P) \leq d_k(P) + O(1).
\end{equation}
Combining \eqref{eq:can_ht} and \eqref{eq:vc} yields, for all degree \(d\) points \(P \in \widetilde{X} \smallsetminus (\widetilde{Z} \cup E) \),
\begin{equation}\label{eq:vc2}
    (1 - \epsilon)((e-1)m-r-1)h(\pi(f(P))) \leq d_k(P) + O(1).
\end{equation}

Abusing notation, suppose that \(P\) is now a degree \(d\) point on the smooth locus of \(X\) that is not contracted by \(\pi\), i.e., \(k(P)=k(\pi(P))\), and not in the image of \(\widetilde{Z} \subset \widetilde{X}\). Since \(P\) is contained in the locus where \(X\) and \(\widetilde{X}\) are isomorphic, it must satisfy equation~\eqref{eq:vc2} and 
\(d_k(P) = d_k(\pi(P))\):
\[(1-\epsilon)((e-1)m-r-1)h(\pi(P)) \leq  d_k(\pi(P)) + O(1).\]
Combining this with Lemma~\ref{lem:silverman}, we obtain
\begin{equation}\label{eq1}(1-\epsilon)((e-1)m-r-1) h(\pi(P)) \leq (2d-2) h(\pi(P)) + O(1).\end{equation}
If \(m > \frac{r + 2d -1}{e-1}\) then, for \(\epsilon\) sufficiently small, \((1-\epsilon)((e-1)m-r-1) - 2(d-1)\) is positive and hence \(h(\pi(P))\) is bounded.  Thus there are only finitely many possible degree \(d\) points \(\pi(P) \in \pp^r\) for which \eqref{eq1} holds.  Since \(\pi\) is a finite map, there are only finitely many degree \(d\) points \(P \in X \smallsetminus f(\widetilde{Z} \cup E)\) that are not contracted by \(\pi\).  Thus the Zariski closure of the degree \(d\) points not contracted by \(\pi\) is contained in \(f(\widetilde{Z} \cup E)\) union the support of these finitely many additional exceptional points, and hence cannot equal \(X\). \qed

\begin{proof}[Proof of Corollary~\ref{cor:small_gcd}]
    Since \(\pi\) is a cyclic (Galois) cover of degree \(e\), for any point \(P \in \pp^r\), and any point \(Q\) in the fiber \(\pi^{-1}(P)\), the extension \(k(Q)/k(P)\) is cyclic of degree dividing \(e\). On the other hand, given a degree \(d\) point \(x \in X\), since \(k(\pi(x))\) is a subfield of \(k(x)\), we have that \([k(x): k(\pi(x))]\) divides \(d\).  Thus \([k(x): k(\pi(x))]\) divides the greatest common divisor \((d, e)\). 

    If \((d,e) = 1\), then this implies that \([k(x): k(\pi(x))] = 1\), and so a degree \(d\) point \(x\) can never be contracted.  Thus, under the hypotheses of Theorem~\ref{thm:main}, the degree \(d\) points are not Zariski dense.

    If \((d,e)=p\) is prime, then this implies that the only way a degree \(d\) point can be contracted is if \([k(x):k(\pi(x))] = p\) and, in fact, if \(k(x)/k(\pi(x))\) is a cyclic degree \(p\) extension.  Thus, by Theorem~\ref{thm:main}, all degree \(d\) points outside of a proper Zariski closed subset have this shape.
\end{proof}

\section{Examples}

We will prove the first part of Theorem~\ref{thm:SharpExamples} by a series of examples.  
In the odd cases (\(m=1,3\)), the particular equations of the surface are not crucial and we give a general recipe to construct examples.  In the even cases (\(m=2,4\)), we employ a more subtle construction and we will rely on a particular choice of surface.  In all of these examples, we use the Hilbert Irreducibility Theorem to guarantee the existence of dense quadratic points (see \cite[Section 3.3]{VV}).

\begin{example}[\boldmath\(m=1\)]\label{ex:meq1}
Let \(X \subset \pp^3_\qq\) be a smooth quadric surface.  Choose two distinct points \(p_1 \neq p_2 \in \pp^3(\qq) \smallsetminus X(\qq)\) and let \(\pi_i \colon Q \to \pp^2\) denote projection from \(p_i\).  Each map \(\pi_i\) is a double cover branched over a smooth plane conic.  By the Hilbert Irreducibility Theorem, \(X\) has dense quadratic points coming from the fibers of both \(\pi_1\) and \(\pi_2\) which do not agree, aside from the one common fiber \(\operatorname{span}(p_1, p_2) \cap X\).\hfill\(\righthalfcup\)
\end{example}

\begin{example}[\boldmath\(m=3\)]\label{ex:meq3}
    Let \(X \subset \pp^2 \times \pp^2\) be a smooth complete intersection surface of type \((1,1), (2,2)\).  By the adjunction formula, \(X\) is a K3 surface admitting two distinct degree \(2\) maps \(\pi_1 \colon X \to \pp^1\) and \(\pi_2 \colon X \to \pp^2\), which are necessarily branched over smooth plane sextic curves.  By the Hilbert Irreducibility Theorem, there are dense quadratic points on \(X\) contained in the fibers of both \(\pi_1\) and \(\pi_2\) individually.\hfill\(\righthalfcup\)
\end{example}

In order to motivate the constructions for \(m=2,4\), we briefly recall how to construct examples showing that Vojta's result on quadratic points on hyperelliptic curves of genus at least \(4\) is sharp.  Let \(E\) be a positive rank elliptic curve with Weierstrass model \(y^2 = f(x)\).  Then the genus \(3\) curve \(C\) with equation \(y^2 = f(x^2)\) is a double cover of \(E\) via the map \((x, y) \mapsto (x^2, y)\), and hence has dense quadratic points whose \(x\)-coordinate is not rational; such points are not contracted by the hyperelliptic map.  Geometrically, the curve \(C \to \pp^1\) is the base-change of \(E \to \pp^1\) via the degree \(2\) map \(\pp^1 \to \pp^1\) given by \(x \mapsto x^2\), which is the quotient of \(\pp^2\) by the involution \([x:y] \mapsto [-x:y]\).  

\renewcommand{\P}{\mathbb{P}}

We will adapt this basic strategy to the case of surfaces.
Consider a double cover $\pi \colon X \rightarrow \pp^{2}$ branched over a divisor $D = \{ s = 0 \} \subset \pp^{2}$ of degree $2m$. Suppose there exists an involution $\iota \colon \pp^{2} \rightarrow \pp^{2}$ such that the section $s \in H^{0}(\pp^{2}, \O(D))$ is invariant, i.e., $\iota^{\ast}s = s$. The involution $\iota$ on $\P^{2}$ then lifts to an involution $\tau$ of $X$, and if the quotient $Y \colonequals X/\tau$ contains dense rational points then this gives a different source of quadratic points than $\pi$.
For $m = 4$, we will need the following lemma.

\begin{lem}\label{lem:elliptic}
The curve \( \{ Y^2 = X^4 + (t^8-1)Z^4 \} \subset \pp(1, 1, 2)\) over \(\qq(t)\) has dense \(\qq(t)\)-points.
\end{lem}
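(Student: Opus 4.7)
The curve $C \colon Y^2 = X^4 + (t^8 - 1)Z^4$ in $\pp(1,1,2)$ is a smooth genus one curve carrying the obvious $\qq(t)$-rational point $P_0 = [1:1:0]$, so it is an elliptic curve over $\qq(t)$.  Since an elliptic curve over $\qq(t)$ with infinitely many $\qq(t)$-points automatically has them Zariski dense (the curve is one-dimensional), it suffices to exhibit a point of infinite order.

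The plan is to write down a second explicit point on $C$ and verify that it is non-torsion by specialization.  A natural candidate is $P = [1 : t^4 : 1] \in C(\qq(t))$, since $(t^4)^2 = 1 + (t^8 - 1)$.  To detect torsion, pass to Weierstrass form: the standard birational transformation $(U, V) \mapsto \bigl(2(V + U^2),\, 4U(V + U^2)\bigr)$ with $U = X/Z$, $V = Y/Z^2$ identifies the Jacobian of $C$ (with origin $P_0$) with
\[
E \colon Y^2 = X^3 - 4(t^8 - 1)X,
\]
and sends $P$ to the point $Q = \bigl(2(t^4 + 1),\, 4(t^4 + 1)\bigr) \in E(\qq(t))$.

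To see $Q$ has infinite order, I would specialize at $t = 2$, obtaining $E_2 \colon Y^2 = X^3 - 1020 X$ over $\qq$ and the specialized point $(34, 68)$.  A routine argument (for instance, the point counts $|E_2(\ff_7)| = 8$ and $|E_2(\ff_{11})| = 12$ bound the torsion order by $4$, and both $\zz/4\zz$ and $\zz/2\zz \oplus \zz/2\zz$ are excluded since $1020$ is not a rational square and $-1020 < 0$) shows that $E_2(\qq)_{\mathrm{tors}} = \zz/2\zz$, generated by $(0, 0)$.  Since $(34, 68)$ has nonzero $y$-coordinate, it is not torsion on $E_2(\qq)$, and Silverman's specialization theorem then forces $Q$ to be non-torsion on $E(\qq(t))$, which completes the proof.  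The main obstacle is locating the explicit second point $P$; once it is in hand, the remaining identifications and torsion calculations are mechanical.
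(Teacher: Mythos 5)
Your proof is correct and follows essentially the same route as the paper: both arguments use the two points $[1:1:0]$ and $[1:t^4:1]$ and reduce to checking that the second point is non-torsion after specializing at $t=2$. The only difference is that the paper delegates the final non-torsion verification to Magma, whereas you carry it out by hand via the Weierstrass model $y^2 = x^3 - 4(t^8-1)x$ and the torsion bound coming from reduction modulo $7$ and $11$; your computations (the image point $(34,68)$, the point counts $8$ and $12$, and the exclusion of $\zz/4\zz$ and $\zz/2\zz\oplus\zz/2\zz$) all check out.
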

\begin{proof}
    Two such points are \(P \colonequals [1:1:0]\) and \(Q_t\colonequals[1:t^4:1]\).  It suffices to show that on the elliptic curve with origin \(P\), the point \(Q_t\) is non-torsion.  Furthermore, it suffices to find one specialization \(t_0 \in \qq\) such that \(Q_{t_0}\) is non-torsion.  Magma \cite{magma} verifies this for \(t_0 = 2\) via \texttt{C := HyperellipticCurve([255, 0, 0, 0, 1]); P := C![0,1];} \texttt{E, phi := EllipticCurve(C,P); Order(phi(C![1,16]));} which returns \texttt{0}, indicating that this point has infinite order.
\end{proof}

\begin{example}[\boldmath\(m=2,4\)]\label{ex:meq4}
Let $\pi \colon X \rightarrow \P^{2}_{\qq}$ be the double cover branched over the divisor $D = \{ s = 0 \}$, where $s(x,y,z) \colonequals x^{2m} - y^{2m} + z^{2m} \in H^{0}(\O_{\P^{2}}(2m))$.
Consider the involution on $\P^{2}$ defined by
\(\iota \colon [x:y:z] \mapsto [x:y:-z]\).
The quotient $\pp^{2}/\iota$ is isomorphic to $\pp(1,1,2)$ via the map
\[ g \colon \pp^{2} \rightarrow \pp(1,1,2), \quad [x:y:z] \mapsto [x:y:z^2 = u], \]
where $u$ has weight 2. 
By construction, the section \(s\) is invariant under the involution \(\iota\) and so it descends to the quotient \(\pp^2/\iota\);
in the above variables, $g(D)$ has equation $x^{2m} + y^{2m} + u^m = 0$.  Let \(Y\) be the double cover of \(\pp^2\) branched over \(g(D)\):
\[Y = \{v^2 = x^{2m} + y^{2m} + u^m\} \subset \pp(1,1,2,m) \ni [x:y:u:v].\]
Since $\pp(1,1,2)$ has an $A_{1}$-singularity at the cone point \([0:0:1]\), which is disjoint from $g(D)$, the surface $Y$ inherits two $A_{1}$-singularities.  Blowing up these singular points yields a resolution \(\widetilde{Y} \rightarrow Y\) that is the double cover of the Hirzebruch surface \(\ff_2\) (the blowup \(\pp(1,1,2)\) at the cone point) over (the isomorphic preimage of) \(g(D)\). 

(Although this is not strictly necessary to the argument, note that, in terms of the classes \(F\), a fiber of \(\ff_2 \to \pp^1\), and \(S\), the exceptional divisor of \(\ff_2 \to \pp(1,1,2)\), we have
\(K_{\ff_2} = -2S - 4F\) and \([g(D)] = mS + 2mF\), so \(\widetilde{Y}\) is a surface of Kodiara dimension \(-\infty\) when \(m=2\) and a K3 surface when \(m=4\).  In fact, we will crucially show below that \(\widetilde{Y}\) is an \textit{elliptic} K3 surface when \(m=4\).)

\begin{center}
\begin{tikzcd}[row sep=small]
    & & \widetilde{Y} \arrow[dd] \arrow[ld] \arrow[dddr, "h"] \\
    X \arrow[dd, swap, "\pi"] \arrow[r, "f"] & Y \arrow[dd] & \\
    & & \mathbb{F}_{2} \arrow[ld, "\sigma"] \arrow[dr] \\
    \pp^{2} \arrow[r, swap, "g"] & \pp^{2}/\iota & & \pp^1
\end{tikzcd}
\end{center}

Explicitly, the projective bundle map \(\ff_2 \to \pp^1\) resolves the rational map \(\pp(1,1,2) \dashrightarrow \pp^1\) given by \([x:y:u] \mapsto [x:y]\).  

When \(m=2\), the generic fiber of \(h \colon \widetilde{Y} \to \pp^1\) has affine equation \(v^2 = u^2 + t^4 - 1\) over \(\qq(t)\), where \(t = x/y\).  This is a conic with a \(\qq(t)\)-rational point, and hence a dense set of \(\qq(t)\)-rational points.
When \(m=4\), the generic fiber of \(h \colon \widetilde{Y} \to \pp^1\) has affine equation \(v^2 = u^4 + t^8 - 1\) over \(\qq(t)\), where \(t = x/y\).  This curve has dense \(\qq(t)\)-points by Lemma~\ref{lem:elliptic}. In both cases, the union of sections of \(h \colon \widetilde{Y} \to \pp^1\) is dense in \(\widetilde{Y}\).  

In both cases, passing to the birational surface \(Y\), the union of the rational curves on \(Y\) is also dense.  The preimage any such rational curve under the degree \(2\) map \(f \colon X \to Y\) is a curve with dense quadratic points  contained in the fibers of \(f\) by Hilbert's Irreducibility Theorem.  In particular, \(X\) has dense quadratic points in the fibers of \(f\) (and not in the fibers of \(\pi\), since \(f\) and \(\pi\) are distinct degree \(2\) morphisms.) Explicitly, these quadratic points have \textit{nonrational} \(z\)-coordinate, and so they map to quadratic points on \(\pp^2\) via $\pi$.\hfill\(\righthalfcup\)
\end{example}

Next, we will give a family of examples that demonstrates how Theorem~\ref{thm:main} can fail when the branch divisor of $\pi$ is too singular. This will prove the second part of Theorem~\ref{thm:SharpExamples}.

\begin{example}
To set notation in this example, given a Hirzebruch surface \(\ff_n = \pp(\O_{\pp^1} \oplus \O_{\pp^1}(n))\), with \(n > 0\), we will write \(F\) for the class of a fiber of the projective bundle \(\ff_n \to \pp^1\) and \(S_0\) for the class of the unique section of negative self-intersection \(-n\).  These classes generate \(\Pic(\ff_n)\) and we write \(S_\infty = S_0 + n F\) for the class of a complementary section with self-intersection \(n\).  In terms of these classes, we have \(K_{\ff_n} = -2S_0 - (n+2)F = -2S_\infty + (n-2)F\).

Let \(Y \to \ff_{2n}\) be a double cover branched over a smooth curve in class \(4S_\infty\) such that \(Y\) contains a dense set of rational curves over \(\qq\); an explicit construction of such a surface \(Y\) is the pullback of \(\widetilde{Y} \to \ff_2\) from the \(m=4\) case of Example~\ref{ex:meq4} under a map \(\gamma \colon \pp^1 \to \pp^1\) of degree \(n\):
\begin{center}
\begin{tikzcd}
& \ff_{2n} \arrow[r] \arrow[ldd] & \ff_{2} \arrow[ldd] \\
Y \arrow[r, crossing over] \arrow[d] \arrow[ru] & \widetilde{Y} \arrow[ru] \arrow[d] & \\
\mathbb{P}^1 \arrow[r, "\gamma"] & \mathbb{P}^1
\end{tikzcd}
\end{center}

Let \(\tau \colon \ff_n \to \ff_n\) be the involution induced by the map of vector bundles
\begin{equation}\label{eq:tau_vb}
\mathcal{O}_{\pp^1} \oplus \mathcal{O}_{\pp^1}(n) \xrightarrow{\begin{pmatrix}
1 & 0 \\
0 & -1
\end{pmatrix}} \mathcal{O}_{\pp^1} \oplus \mathcal{O}_{\pp^1}(n).
\end{equation}

The fixed locus of $\tau$ consists of two disjoint sections in classes $S_{0}$ and $S_{\infty}$. 
Since the invariants of the vector bundle map \eqref{eq:tau_vb} involve squaring a section of \(\O(n)\), after projectivizing, the quotient map is
\[
g \colon \mathbb{F}_n \to \mathbb{F}_{2n}
\]
which is branched over the union of two disjoint sections in classes \(S_0\) and \(S_\infty\) on \(\ff_{2n}\). 
Pulling back \(Y \to \ff_{2n}\) along $g$, we obtain a double cover \(\wt{\pi} \colon \wt{X} \to \ff_2\) in the following diagram:
\begin{center}
\begin{tikzcd}
\wt{X} \arrow[r] \arrow[d, "\wt{\pi}"] & Y \arrow[d] \\
\ff_n \arrow[r, "g"] & \ff_{2n}
\end{tikzcd}
\end{center}
The surface \(\wt{X}\) has dense quadratic points in the fibers of both maps \(\wt{\pi} \colon \wt{X} \to \ff_n\) and \(\wt{X} \to Y\) by the Hilbert Irreducibility Theorem.

We now show that invariants of \(\wt{X}\) are unbounded as functions of \(n\).  Recall that \(Y \to \ff_{2n}\) is branched over a smooth curve in class \(4S_\infty\).  The class \(S_\infty\) on \(\ff_{2n}\) pulls back to \(2S_\infty\) on \(\ff_n\) under \(g\) since a section in class \(S_\infty\) is in the branch locus of \(g\).  Thus the cover \(\wt{X} \to \ff_n\) is branched over a curve \(C\) in class \(8S_\infty\).  We therefore have
\[
K_{\wt{X}} = {\wt{\pi}}^*\left(K_{\mathbb{F}_n} + 4S_\infty\right) = {\wt{\pi}}^*\left(2S_\infty + (n-2)F\right) = {\wt{\pi}}^*\left(2S_0 + (3n-2)F\right)
\]
which is ample for $n > 2$ since \(2S_0 + (3n-2)F\) on \(\ff_n\) is ample for \(n > 2\)  \cite[Chapter V, Corollary 2.18]{hartshorne}. This has volume
\(K_{\wt{X}}^2 = 2 (2S_\infty + (n-2)F)^2
= 16(n-1),
\)
which is unbounded as $n \to \infty$.  Furthermore, by the adjuction formula, the genus of the branch curve \(C\) of \(\wt{\pi}\) is 
\[
\frac{8S_\infty \cdot (K_{\mathbb{F}_n} + 8S_\infty)}{2} + 1 = 4S_\infty \cdot (6S_\infty + (n-2)F) + 1 = 28n - 7,
\]
which is, again, unbounded as \(n \to \infty\).

Finally, we show that \(\wt{X}\) is birational to a variety \(X\) -- which does \textit{not} have canonical singularities --  that admits a double cover \(\pi \colon X \to \pp^2\) branched over a divisor of unbounded degree as \(n \to \infty\).  Let \(\beta_1 \colon \widetilde{\ff} \to \ff_n\) be the blowup of \(n-1\) points \(p_1, \dots, p_n\) in distinct fibers \(F_1, \dots, F_{n-1}\) of \(\ff_n \to \pp^1\) and not lying on the branch curve \(C\) or on the distinguished negative section \(S_0\).  The proper transforms \(\widetilde{F}_1, \dots, \widetilde{F}_{n-1}\) are \((-1)\)-curves on \(\widetilde{\ff}\) and so can be contracted to yield another projective bundle over \(\pp^1\), which is an elementary modification of \(\ff_n\) \cite[Chapter V, Example 5.7.1]{hartshorne}.  By the push-pull formula, the image of the proper transform of \(S_0\) is a section of self-intersection \(-1\) and so the projective bundle is isomorphic to \(\ff_1\).  Write \(\beta_2\colon \widetilde{\ff} \to \ff_1\) for this blow-down morphism.  Together these yield a birational map \(\psi \colon \ff_n \rat \ff_1\) defined away from \(p_1, \dots, p_n \in \ff_n\), and hence defined on \(C\).  Since the fibers \(F_1, \dots, F_{n-1}\) are contracted by this map, and \(C\) meets each of \(F_1, \dots, F_{n-1}\) with multiplicity \(8\), the image \(\psi(C) \subset \ff_1\) has \(n-1\) \(8\)-fold points along the negative section \(S_0 \subset \ff_1\).
Contracting the \(-1\)-curve \(S_0 \subset \ff_1\) yields the morphism \(\ff_1 \to \pp^2\).  Write \(D\) for the image of \(\psi(C)\) under this morphism, which is a curve of degree \(8n\) and has a point of multiplicity \(8(n-1)\) at the image of \(S_0\).  Let \(\pi \colon X \to \pp^2\) be the double cover branched over \(D\) fitting into the diagram:

\begin{center}
    \begin{tikzcd}
        \wt{X} \arrow[dd, "\wt{\pi}", swap]\arrow[rrr, dashed, "\text{bir}"] &&& X \arrow[dd, "\pi"] \\
        &\widetilde{\ff} \arrow[dl, "\beta_1", swap] \arrow[dr, "\beta_2"]\\
        \ff_n \arrow[rr, dashed, "\text{bir}"] && \ff_1 \arrow[r] & \pp^2
    \end{tikzcd}
\end{center}

Since \(X\) is birational to \(\wt{X}\), the quadratic points not in the fibers of \(\pi\) are Zariski dense.  The branch curve \(D\) has degree \(8n\), but because of the multiplicity \(8(n-1)\) point, the double cover \(X\) does not have canonical singularities.\hfill\(\righthalfcup\)
\end{example}

Finally, as a consequence of the following theorem, assuming the Bombieri--Lang Conjecture, similar constructions as in Examples~\ref{ex:meq1}-\ref{ex:meq4} of double covers \(\pi \colon X \to \pp^2\) with canonical singularities that are simultaneously double covers of an auxiliary variety \(V\) with dense rational points cannot exist if the degree of the branch divisor of \(\pi\) is at least \(10\).  Hence, assuming the Bombieri--Lang Conjecture, if there is a positive answer to Question~\ref{q:deg10}, the second source of quadratic points \textit{cannot} come from a second double cover.

\begin{thm} \label{thm:degree_2_maps}
Suppose that $\pi \colon X \to \PP^r$ is a double cover branched over a divisor of degree $2m$ with $m \ge r + 2$ such that $X$ has canonical singularities. If $X \rat V$ is any degree $2$ rational map to an $r$-fold \(V\), then $X \dashrightarrow V$ is birational to one of the following:
\begin{enumerate}
    \item the double cover $\pi : X \to \P^r$
    \item the morphism fitting into a diagram
    \begin{center}
    \begin{tikzcd}
    X \arrow[r] \arrow[d] & V \arrow[d]
    \\
    \P^r \arrow[r] & \P^r / \iota 
    \end{tikzcd}
    \end{center}
    where $\iota$ is a linear involution of $\P^r$.
\end{enumerate}
If, furthermore, \(r \geq 2\) and \(m \geq r+3\) then \(V\) is of general type, and so, assuming the Bombieri--Lang Conjecture, the rational points on \(V\) are not Zariski dense.
\end{thm}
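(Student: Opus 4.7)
The plan is to leverage the ampleness of $K_X$ to show that the covering involutions $\sigma$ of $\pi$ and $\tau$ of $f$ commute, reducing to the two listed cases, and then to compute $K_V$ explicitly for the general-type assertion.

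First, the hypothesis $m \geq r+2$ guarantees $K_X = \pi^*((m-r-1)H)$ is ample, since $\pi^* H$ is ample ($\pi$ being finite). Combined with $X$ having canonical singularities, $X$ is its own canonical model, so $\operatorname{Bir}(X) = \Aut(X)$ is a finite group and $f\colon X \rat V$ is induced by an honest involution $\tau \in \Aut(X)$ with $V \bireq X/\tau$.

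Next I would show that $\sigma$ and $\tau$ commute. Using $\pi_*\O_X = \O_{\PP^r} \oplus \O_{\PP^r}(-m)$ and $H^0(\PP^r, \O(-r-1)) = 0$, we compute $H^0(X, K_X) = H^0(\PP^r, \O(m-r-1))$, so the $1$-canonical morphism factors as $X \xrightarrow{\pi} \PP^r \xrightarrow{\operatorname{Ver}_{m-r-1}} \PP^N$. Thus any $g \in \Aut(X)$ permutes $\pi$-fibers, inducing $\bar g \in \Aut(\PP^r)$ with $\pi \circ g = \bar g \circ \pi$. Applying this to $g = \tau\sigma\tau^{-1}$ yields $\pi \circ (\tau\sigma\tau^{-1}) = \bar\tau \bar\sigma \bar\tau^{-1} \circ \pi = \pi$ (since $\bar\sigma = \operatorname{id}$), so $\tau\sigma\tau^{-1}$ lies in the Galois group $\{1, \sigma\}$; as it is a nontrivial involution, $\tau\sigma\tau^{-1} = \sigma$. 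If $\tau = \sigma$ we are in case (1); otherwise $G \colonequals \langle \sigma, \tau \rangle \simeq \zz/2 \times \zz/2$, and $\iota \colonequals \sigma\tau$ descends to a linear involution $\bar\iota \in \Aut(\PP^r) = \PGL_{r+1}$, so $V = X/\tau$ fits into the square of case (2) with $W = \PP^r/\bar\iota$.

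For the final assertion, I would compute $K_V$ via $f^*K_V = K_X - R_f$, where $R_f$ is the codimension-$1$ ramification divisor of $f$. The fixed locus of $\bar\iota$ on $\PP^r$ is a disjoint union $L_+ \sqcup L_-$ of two linear subspaces (the $\pm 1$-eigenspaces of a lift to $\GL_{r+1}$), with $\operatorname{codim} L_+ + \operatorname{codim} L_- = r+1$. Over each $L_\pm$, the involution $\tau$ acts on a generic $\pi$-fiber as either the identity or as $\sigma$, and contributes to the codimension-$1$ part of $R_f$ exactly when $L_\pm$ is a hyperplane and $\tau$ acts trivially there. Because $r \geq 2$, at most one of $L_\pm$ can be a hyperplane, so $R_f = c\,\pi^* H$ for some $c \in \{0, 1\}$. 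Hence
\[
f^*K_V = \pi^*\bigl((m - r - 1 - c)H\bigr),
\]
which is ample whenever $m \geq r+3$. Since $f$ is finite, $K_V$ is then big, so $V$ is of general type, and Bombieri--Lang implies its rational points are not Zariski dense. The main technical obstacle is tracking ramification carefully through the (possibly non-canonical) quotient $V$; this is handled by working on a resolution and using birational invariance of ``general type.''
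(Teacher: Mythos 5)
Your proposal is correct and follows essentially the same route as the paper: use ampleness of $K_X$ and canonical singularities to upgrade the covering involution of $f$ to a regular automorphism, factor the canonical map through $\pi$ (via $\pi_*\omega_X$) to descend it to a linear involution of $\PP^r$, and then bound the divisorial part of the fixed locus by $\pi^*H$ (using $r\geq 2$, exactly as in the paper's closing remark) to get $f^*K_V = \pi^*((m-r-1-c)H)$ with $c\in\{0,1\}$ ample for $m\geq r+3$. Your explicit conjugation argument $\tau\sigma\tau^{-1}=\sigma$ is just a spelled-out version of the paper's assertion that the two involutions commute.
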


\begin{proof}
The map $f \colon X \rat V$ is degree $2$ and therefore induces on $X$ a rational involution $\wt{\iota}$. However, $X$ has canonical bundle $K_X = \pi^*(K_{\P^r} + m H) = \pi^* (m - r-1)H$ which is ample for $m \ge r + 2$. Therefore, $X$ is embedded by some pluri-canonical linear series $|nK_X|$ for a large power $n > 0$. Since $\wt{\iota}$ acts on $H^0(X, \cO_X(r K_X))$ and is regular on the canonical model, this implies that $\wt{\iota} \in \Aut(X)$ is a regular involution. Furthermore, it preserves the canonical linear series $|K_X|$. 
Since $\pi_* \omega_X = \pi_* \pi^* \cO_{\pp^r}(m-r-1) = \cO_{\pp^r}(m-r-1) \ot \pi_* \cO_X = \cO_{\pp^r}(m-r-1) \oplus \cO_{\pp^r}(-r-1)$, we have that $\omega_X$ is globally generated and all its sections are pulled back from sections of $\cO_{\pp^r}(m-r-1)$.
Hence $|K_X|$ induces the map $X \xrightarrow{\pi} \P^r \embed \P^{N}$ (up to a linear change of variables on $\P^r$) where the second map is a Veronese embedding of degree $(m - r - 1)$. 
Therefore, $\wt{\iota}$ commutes with $\tau$ and induces a (regular, i.e., linear) automorphism of $\P^N$, which then induces a (regular, i.e., linear) automorphism $\iota$ of $\P^r$. These fit into the diagram above, which completes the first part of the proof.

We now show that if $m \ge r + 3$, then  $V$ is of general type. The divisorial components \(\Fix(\wt{\iota})\) of the fixed locus of $\wt{\iota}$ is contained in $\pi^* H$, where \(H\) is the hyperplane class on \(\pp^r\), which is the divisorial component of the fixed locus of the linear involution $\iota$. Precisely, there are two possibilities, corresponding to the trivial lift $\wt{\iota}$ with $\Fix(\wt{\iota}) = \pi^* H$ or $\tau \circ \wt{\iota}$ with fixed locus $\Fix(\wt{\iota}) =(\pi^* H) \cap R$ where $R \subset X$ is the ramification divisor. In either case, $f^* K_V = K_X - \mathrm{Fix}(\wt{\iota}) = \pi^* (m - r - 1) - \mathrm{Fix}(\wt{\iota})$ is ample for $m \ge r + 3$ and hence \(K_V\) is also ample. Therefore, assuming the Bombieri--Lang Conjecture, the quadratic points on \(X\) in the fibers of \(f\) cannot be Zariski dense, since their images are rational points on \(V\), which are not Zariski dense. 
\end{proof}

\begin{rem}
The analysis at the end of the proof of Theorem~\ref{thm:degree_2_maps} is different when $r = 1$. Indeed, then $\mathrm{Fix}(\iota) \sim 2 H$, since it consists of two reduced points, so when $m = r + 2 = 3$ one obtains a genus $3$ hyperelliptic curve with a $2:1$ map to an elliptic curve from construction (2). As discussed previously, this example shows Vojta's bounds are sharp in dimension $1$. Since in higher dimensions, the fixed locus of $\iota$ consists of a disjoint point and line, its divisorial component $H$ is of degree $1$. 
\end{rem}

\begin{bibdiv}
	\begin{biblist}

\bib{magma}{article}{
   author={Bosma, Wieb},
   author={Cannon, John},
   author={Playoust, Catherine},
   title={The Magma algebra system. I. The user language},
   note={Computational algebra and number theory (London, 1993)},
   journal={J. Symbolic Comput.},
   volume={24},
   date={1997},
   number={3-4},
   pages={235--265},
   issn={0747-7171},
   review={\MR{1484478}},
   doi={10.1006/jsco.1996.0125},
}

\bib{Faltings-Mordell}{article}{
   author={Faltings, Gerd},
   title={Endlichkeitss\"{a}tze f\"{u}r abelsche Variet\"{a}ten \"{u}ber Zahlk\"{o}rpern},
   language={German},
   journal={Invent. Math.},
   volume={73},
   date={1983},
   number={3},
   pages={349--366},
   issn={0020-9910},
   review={\MR{718935}},
   doi={10.1007/BF01388432},
}

\bib{HS-Degree2}{article}{
   author={Harris, Joe},
   author={Silverman, Joe},
   title={Bielliptic curves and symmetric products},
   journal={Proc. Amer. Math. Soc.},
   volume={112},
   date={1991},
   number={2},
   pages={347--356},
   issn={0002-9939},
   review={\MR{1055774}},
   doi={10.2307/2048726},
}

\bib{hartshorne}{book}{
   author={Hartshorne, Robin},
   title={Algebraic geometry},
   series={Graduate Texts in Mathematics},
   volume={No. 52},
   publisher={Springer-Verlag, New York-Heidelberg},
   date={1977},
   pages={xvi+496},
   isbn={0-387-90244-9},
   review={\MR{0463157}},
}

\bib{KV}{article}{
   author={Kadets, Borys},
   author={Vogt, Isabel},
   title={Subspace configurations and low degree points on curves},
   journal={Adv. Math.},
   volume={460},
   date={2025},
   pages={Paper No. 110021, 36},
   issn={0001-8708},
   review={\MR{4828751}},
   doi={10.1016/j.aim.2024.110021},
}

\bib{silverman}{article}{
   author={Silverman, Joseph H.},
   title={Lower bounds for height functions},
   journal={Duke Math. J.},
   volume={51},
   date={1984},
   number={2},
   pages={395--403},
   issn={0012-7094},
   review={\MR{0747871}},
   doi={10.1215/S0012-7094-84-05118-4},
}

\bib{Silverman94}{book}{
    AUTHOR = {Silverman, Joseph H.},
     TITLE = {Advanced topics in the arithmetic of elliptic curves},
    SERIES = {Graduate Texts in Mathematics},
    VOLUME = {151},
 PUBLISHER = {Springer-Verlag, New York},
      YEAR = {1994},
     PAGES = {xiv+525},
      ISBN = {0-387-94328-5},
   MRCLASS = {11G05 (11G07 11G15 11G40 14H52)},
  MRNUMBER = {1312368},
MRREVIEWER = {Henri\ Darmon},
       DOI = {10.1007/978-1-4612-0851-8},
       URL = {https://doi.org/10.1007/978-1-4612-0851-8},
}

\bib{VV}{unpublished}{
   author={Viray, Bianca},
   author={Vogt, Isabel},
   title={Isolated and parameterized points on curves},
   note={To appear in \textit{Essential Number Theory}},
}

\bib{vojta_diophantine}{book}{
   author={Vojta, Paul},
   title={Diophantine approximations and value distribution theory},
   series={Lecture Notes in Mathematics},
   volume={1239},
   publisher={Springer-Verlag, Berlin},
   date={1987},
   pages={x+132},
   isbn={3-540-17551-2},
   review={\MR{0883451}},
   doi={10.1007/BFb0072989},
}

\bib{Vojta-arithmeticdiscriminant}{article}{
   author={Vojta, Paul},
   title={Arithmetic discriminants and quadratic points on curves},
   conference={
      title={Arithmetic algebraic geometry},
      address={Texel},
      date={1989},
   },
   book={
      series={Progr. Math.},
      volume={89},
      publisher={Birkh\"auser Boston, Boston, MA},
   },
   isbn={0-8176-3513-0},
   date={1991},
   pages={359--376},
   review={\MR{1085268}},
   doi={10.1007/978-1-4612-0457-2\_17},
}

\bib{vojta_92}{article}{
   author={Vojta, Paul},
   title={A generalization of theorems of Faltings and
   Thue-Siegel-Roth-Wirsing},
   journal={J. Amer. Math. Soc.},
   volume={5},
   date={1992},
   number={4},
   pages={763--804},
   issn={0894-0347},
   review={\MR{1151542}},
   doi={10.2307/2152710},
}

\bib{vojta_abc}{article}{
   author={Vojta, Paul},
   title={A more general $abc$ conjecture},
   journal={Internat. Math. Res. Notices},
   date={1998},
   number={21},
   pages={1103--1116},
   issn={1073-7928},
   review={\MR{1663215}},
   doi={10.1155/S1073792898000658},
}

	\end{biblist}
\end{bibdiv}

\end{document}